\theoremstyle{plain}
\newtheorem{theorem}{Theorem}[section]
\newtheorem{lemma}[theorem]{Lemma}
\theoremstyle{definition}
\theoremstyle{remark}
\def\R{\mbox{$\mathbb{R}$}}
\newcommand{\G}{\Gamma}
\newcommand{\Z}{\mathbb{Z}}
\begin{document}
\title{On a lower bound for the Laplacian eigenvalues of a graph}

\author[G. R. W. Greaves]{ Gary R. W. Greaves }
\thanks{G.R.W.G. was at Tohoku University while this work took place and was supported by JSPS KAKENHI; 
grant number: 26$\cdot$03903}
\author[A. Munemasa]{ Akihiro Munemasa}
\author[A. Peng]{ Anni Peng }


\address{School of Physical and Mathematical Sciences, 
Nanyang Technological University, 21 Nanyang Link, Singapore 637371}
\email{grwgrvs@gmail.com}

\address{Research Center for Pure and Applied Mathematics,
Graduate School of Information Sciences, 
Tohoku University, Sendai 980-8579, Japan}
\email{munemasa@tohoku.ac.jp}

\address{School of Mathematical Sciences, Tongji University, 
1239 Siping Road, Shanghai, P.R. China}
\email{anni\_peng@163.com}

\subjclass[2010]{05E30, 05C50}

\keywords{Laplacian eigenvalues, degree sequence}

\begin{abstract}
	If $\mu_m$ and $d_m$ denote, respectively, the $m$-th largest Laplacian eigenvalue and the $m$-th largest vertex degree of a graph, then $\mu_m \geqslant d_m-m+2$. 
	This inequality was conjectured by Guo in 2007 and proved by Brouwer and Haemers in 2008. Brouwer and Haemers gave several examples of graphs achieving equality, but a complete characterisation was not given. 
	In this paper we consider the problem
	of characterising graphs satisfying $\mu_m = d_m-m+2$.
	In particular we give a full classification of graphs with $\mu_m = d_m-m+2 \leqslant 1$.
\end{abstract}

\maketitle

\section{Introduction}

Let $\Gamma$ denote a finite, simple graph having $n$ vertices, let $V(\Gamma)$ denote the vertex set of $\Gamma$, and write $x \sim y$ to indicate that the vertices $x$ and $y$ are adjacent.
For a vertex $x$ of $\Gamma$, we write $\deg x$ to denote the \textbf{vertex-degree} of $x$.
The \textbf{adjacency matrix} $A$ of $\Gamma$ is defined as the symmetric $\{0,1\}$-matrix whose rows and columns are indexed by $V(\Gamma)$, where $A_{xy} = 1$ if $x \sim y$ and otherwise $A_{xy} = 0$.
The set of neighbours of a vertex $x \in V(\Gamma)$ is denoted by $\Gamma(x) = \{ v \in V(\Gamma) \mid v \sim x \}$.
The \textbf{Laplacian matrix} $L(\Gamma)$ of $\Gamma$ is defined as $L(\Gamma) = D - A$, where $D$ is the diagonal matrix given by $D_{xx} = \deg x$.
The eigenvalues of $L(\Gamma)$ are known as the \textbf{Laplacian eigenvalues} of $\Gamma$.

Denote by $d_i(\Gamma)$ and $\mu_i(\Gamma)$, respectively, the $i$th largest vertex-degree and $i$th largest Laplacian eigenvalue of $\Gamma$.
When it is clear which graph is under consideration, we merely write $d_i$ and $\mu_i$.

Brouwer and Haemers~\cite[Theorem 1]{BH1} proved the following lower bound for the $m$th largest Laplacian eigenvalue of $\Gamma$. 
\begin{theorem}\label{thm:BH}
	Let $\Gamma$ be a graph having vertex degrees $d_1 \geqslant d_2 \geqslant \dots \geqslant d_n$
	and Laplacian eigenvalues $\mu_1 \geqslant \mu_2 \geqslant \dots \geqslant \mu_n = 0$.
	Suppose $m \in \{1,\dots,n\}$ and $\Gamma \ne K_m \cup (n-m)K_1$.
	Then $\mu_m \geqslant d_m - m +2$.
\end{theorem}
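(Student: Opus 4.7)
The plan is to apply Cauchy interlacing to an $m \times m$ principal submatrix $L_S$ of $L(\Gamma)$. Let $S = \{v_1, \dots, v_m\}$ be $m$ vertices realising the top $m$ degrees $d_1, \dots, d_m$, and let $L_S = D_S - A_{\Gamma[S]}$ denote the corresponding principal submatrix, where $D_S = \mathrm{diag}(d_1, \dots, d_m)$. Cauchy interlacing yields $\mu_m(L(\Gamma)) \geq \mu_m(L_S)$, so it would suffice to show $\mu_m(L_S) \geq d_m - m + 2$ for a well-chosen $S$.

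A first estimate comes from the decomposition
\[
L_S - (d_m - m + 1)I \;=\; (D_S - d_m I) \;+\; \bigl((m-1)I - A_{\Gamma[S]}\bigr),
\]
in which both summands are positive semidefinite: the first because $d_i \geq d_m$ for all $i \in S$, and the second because the spectral radius of an $m$-vertex adjacency matrix never exceeds $m - 1$. Weyl's inequality then yields $\mu_m(L_S) \geq d_m - m + 1$, leaving a gap of exactly one unit to the desired bound.

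To gain the extra unit, I would exploit the fact that the two positive semidefinite summands cannot both be singular in the same direction unless the configuration is extremal: $(m-1)I - A_{\Gamma[S]}$ has $\mathbf{1}$ as a $0$-eigenvector precisely when $\Gamma[S] = K_m$, while $D_S - d_m I$ kills the coordinate direction $e_i$ precisely when $d_i = d_m$. For generic $\Gamma$ one of these obstructions fails, and a rank-one refinement (or equivalently a Schur-complement adjustment targeting the Perron direction of $A_{\Gamma[S]}$) should yield $L_S \succeq (d_m - m + 2)I$. Small examples such as the paw graph indicate that realising this tightness may sometimes require choosing $S$ adaptively rather than as the top-$m$-degree set: in the paw, the top-$3$ choice $S = \{a,b,c\}$ produces $\mu_m(L_S) < d_m - m + 2$, while the alternative $S = \{b,c,d\}$ achieves equality.

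The main obstacle is the genuinely degenerate case in which $\Gamma[S] = K_m$ and $d_1 = \dots = d_m$ for every allowed size-$m$ subset $S$, which happens precisely when $\Gamma$ contains $K_m$ as a clique-like component. Here $L_S = (m + \beta) I - J$ with $\beta \geq 0$ the common external degree, and its smallest eigenvalue is exactly $\beta = d_m - m + 1$, one short of the target. This is where the hypothesis $\Gamma \neq K_m \cup (n-m) K_1$ becomes essential. If $\beta = 0$, then $\Gamma$ splits as $K_m \sqcup \Gamma'$ and the hypothesis forces $\Gamma'$ to contain an edge; the spectrum of $L(\Gamma)$ is the union of $\{0, m^{(m-1)}\}$ with the spectrum of $L(\Gamma')$, the latter containing an eigenvalue at least $2$, whence $\mu_m(L(\Gamma)) \geq 2 \geq d_m - m + 2 = 1$. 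If $\beta \geq 1$ there is some $u \notin S$ adjacent to a vertex of $S$, and adjoining $u$ to $S$ and repeating the interlacing on the $(m+1)\times(m+1)$ principal submatrix $L_{S \cup \{u\}}$ should close the remaining gap.
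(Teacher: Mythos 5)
This theorem is not proved in the paper at all --- it is quoted from Brouwer and Haemers \cite{BH1} --- so your attempt has to be measured against that proof and against the machinery the present paper builds on top of it. Your opening steps are sound and essentially reproduce Lemma 2 of \cite{BH1} (Lemma~\ref{lem:5a} here): the decomposition of $L_S-(d_m-m+1)I$ into two positive semidefinite summands correctly gives $\lambda_m(L_S)\geqslant d_m-m+1$, hence $\mu_m\geqslant d_m-m+1$. But the entire content of the theorem is the remaining $+1$, and the mechanism you propose for it does not exist. The assertion that outside the degenerate case a ``rank-one refinement'' yields $L_S\succeq (d_m-m+2)I$ is false, and no adaptive choice of $S$ can rescue it. For $m=1$ every $1\times 1$ principal submatrix of $L(\Gamma)$ is a vertex degree, hence at most $d_1<d_1+1$, yet $P_3$ has $\mu_1=d_1+1=3$; this already falls into your ``degenerate case'' with $\beta=2\geqslant 1$, and your fallback of adjoining one neighbour $u$ fails too, since $L_{S\cup\{u\}}=\left(\begin{smallmatrix}2&-1\\-1&1\end{smallmatrix}\right)$ has largest eigenvalue $(3+\sqrt5)/2<3$. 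For $m=2$ the path $P_4$ forces $S=\{b,c\}$ (the only pair of vertices of degree at least $d_2=2$), and $\lambda_2(L_S)=1<2=d_2-m+2$, while $\lambda_2(L_{S\cup\{a\}})\approx 1.55<2$. So both the main step and the patch break on small graphs where the theorem holds with equality.

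What these examples show is that the extra unit comes from information a principal submatrix on $S$ (or on $S$ plus one vertex) cannot see, namely the edges leaving $S$. Every known proof routes this information in through either a quotient matrix whose parts include $V(\Gamma)\setminus S$ --- exactly as in Lemma~\ref{lem:5a} and Lemmas~\ref{lem:6a}--\ref{lem:6b} of this paper and in \cite{BH1} --- or through subgraph interlacing (Lemma~\ref{lem:interlacing2}), e.g.\ $\mu_1\geqslant\mu_1(K_{1,d_1})=d_1+1$ in the case $m=1$. The proof in \cite{BH1} combines the ``$e$ neighbours outside $S$'' lemma with a case analysis on whether each vertex of $S$ has strictly more than $d_m-m+1$ neighbours outside $S$, using such quotient arguments to exclude $d_m-m+1$ from the relevant spectrum. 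Of your argument, only the $\beta=0$ branch (where the hypothesis $\Gamma\neq K_m\cup(n-m)K_1$ enters and $\mu_m\geqslant 2$ follows from the extra edge) is carried out correctly; the rest needs to be replaced, not refined.
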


This theorem was conjectured by Guo~\cite{guo}, who had proved the result for the special case when $m=3$.
Special cases of this result had been demonstrated earlier by Li and Pan~\cite{liPan} (who settled the case $m=2$), and Grone and Merris~\cite{ro} (who settled the case $m=1$).

In this paper, we are motivated by the question of, for a given $m \geqslant 1$, which graphs satisfy the equality $\mu_m = d_m - m +2$.
This question was considered in \cite{BH}, however, only partial results were obtained.
In particular, for $m=1$, a connected graph on $n$ vertices satisfies $\mu_1 = d_1+1$ if and only if it has a vertex of degree $n-1$.
Our main results include a full classification of graphs satisfying $\mu_m = d_m - m + 2$ when $\mu_m \leqslant 1$ and a partial classification of graphs satisfying $\mu_m = d_m - m + 2$ for graphs that contain a certain subgraph for $m \geqslant 1$.

In Section~\ref{sec:main_results} we state our main results and we give the proofs in Section~\ref{sec:proofs}.

\section{Main tools and results} 
\label{sec:main_results}

In this section we state our main tools and our main results.
Our main tools are contained in the following two lemmas about the interlacing of eigenvalues.
(See \cite[Section 2]{BH} for proofs.)
 
For a real symmetric matrix $N$ of order $n$, we denote its eigenvalues by
\[\lambda_1(N) \geqslant \lambda_2(N) \geqslant \dots \geqslant \lambda_n(N),\]
and the multiset of the eigenvalues by $\operatorname{Spec}(N)$.

\begin{lemma}[Interlacing I]\label{lem:interlacing1}
Let $N$ be a real symmetric matrix of order $n$.
Suppose that $M$ is a principal submatrix of $N$, 
or a quotient matrix of $N$, of order $m$.
Then the eigenvalues of $M$ \emph{interlace} those of $N$, that is 
$\lambda_i(N) \geqslant \lambda_i(M) \geqslant \lambda_{n-m+i}(N)$ for $i = 1,\dots,m$.
\end{lemma}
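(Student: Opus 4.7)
The plan is to derive both statements from the Courant--Fischer min-max characterisation
\[ \lambda_i(N) = \max_{\substack{S \leqslant \mathbb{R}^n \\ \dim S = i}} \min_{0 \neq x \in S} \frac{x^\top N x}{x^\top x}, \]
applied to a suitable realisation of $M$ as $U^\top N U$ for some $n \times m$ matrix $U$ whose columns are orthonormal. Once the two cases are reduced to this common form, the interlacing inequalities follow uniformly from a single computation.

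For the principal submatrix case, I would reorder the basis of $\mathbb{R}^n$ so that $M$ occupies the top-left block of $N$, and let $U = \begin{pmatrix} I_m \\ 0 \end{pmatrix}$. Then $M = U^\top N U$ and $U$ has orthonormal columns. For the upper bound $\lambda_i(M) \leqslant \lambda_i(N)$, take an optimal $i$-dimensional subspace $S \subseteq \mathbb{R}^m$ achieving $\lambda_i(M)$ in the min-max formula, extend it to $US \subseteq \mathbb{R}^n$ (still $i$-dimensional), and observe the Rayleigh quotients agree. For the lower bound $\lambda_{n-m+i}(N) \leqslant \lambda_i(M)$, apply the same argument to $-N$ and $-M$ (or equivalently use the dual min-max formula).

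For the quotient matrix case, I would let $\pi$ be the $n \times m$ characteristic matrix of the underlying partition (so the $j$th column is the indicator of the $j$th part), set $S = \pi^\top \pi$, which is diagonal with the part sizes on the diagonal, and form $U = \pi S^{-1/2}$. Then $U$ has orthonormal columns, and the symmetric matrix $\tilde{B} = U^\top N U = S^{-1/2} \pi^\top N \pi S^{-1/2}$ is similar to the quotient matrix $B = S^{-1} \pi^\top N \pi$ via the conjugation $\tilde{B} = S^{1/2} B S^{-1/2}$. In particular $B$ and $\tilde{B}$ have the same spectrum, so the interlacing for $\tilde{B}$ established exactly as in the previous paragraph transfers to $B$.

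The only conceptually nontrivial step is the observation that the (generally non-symmetric) quotient matrix $B$ is similar to the symmetric matrix $\tilde{B}$ obtained by a diagonal rescaling; once that is noted, both halves of the lemma collapse to the standard fact that eigenvalues of $U^\top N U$ interlace those of $N$ whenever $U^\top U = I_m$. The case analysis of when $N$ is the Laplacian versus some other combinatorial matrix plays no role here, so no graph-theoretic input is required.
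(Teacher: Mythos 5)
Your proof is correct and is essentially the standard argument: the paper itself gives no proof of this lemma, deferring to \cite[Section 2]{BH}, where interlacing is established in exactly this way --- by reducing both cases to a matrix $U^\top N U$ with $U^\top U = I_m$ (with the diagonal rescaling $S^{1/2}BS^{-1/2}$ handling the quotient case) and then applying the Rayleigh/Courant--Fischer principle. No discrepancy to report.
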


\begin{lemma}[Interlacing II]\label{lem:interlacing2}
	Let $\Gamma$ be a graph and let $\Delta$ be a (not necessarily induced) subgraph of $\Gamma$ on $m$ vertices.
	Then $\mu_i(\Delta) \leqslant \mu_i(\Gamma)$ for all $i \in \{1,\dots,m\}$.
\end{lemma}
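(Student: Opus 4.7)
The plan is to apply the Courant--Fischer min--max characterisation to the Laplacian quadratic form $x^{\top} L(H) x = \sum_{uv \in E(H)} (x_u - x_v)^2$, after zero-extending test vectors from $V(\Delta)$ to $V(\Gamma)$. Concretely, given any $i$-dimensional subspace $W \subseteq \mathbb{R}^{V(\Delta)}$, let $\widetilde{W} \subseteq \mathbb{R}^{V(\Gamma)}$ be its image under the isometric embedding $y \mapsto \widetilde{y}$ with $\widetilde{y}_v = y_v$ for $v \in V(\Delta)$ and $\widetilde{y}_v = 0$ otherwise. Then $\dim \widetilde{W} = i$ and $\|\widetilde{y}\| = \|y\|$, so the quadratic-form values on $\widetilde{W}$ can be fed directly into the min--max formula for $\mu_i(\Gamma)$.

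Next I would partition $E(\Gamma)$ into three classes: edges in $E(\Delta)$, edges joining two vertices of $V(\Delta)$ but absent from $E(\Delta)$, and edges with at least one endpoint outside $V(\Delta)$. On the zero-extension $\widetilde{y}$, the first class contributes exactly $y^{\top} L(\Delta) y$ to $\widetilde{y}^{\top} L(\Gamma)\widetilde{y}$, and the other two classes contribute a sum of squares, hence a nonnegative amount. Therefore $\widetilde{y}^{\top} L(\Gamma)\widetilde{y} \geqslant y^{\top} L(\Delta) y$, and Courant--Fischer yields
\[
\mu_i(\Gamma) \;\geqslant\; \min_{0 \neq \widetilde{y} \in \widetilde{W}} \frac{\widetilde{y}^{\top} L(\Gamma)\widetilde{y}}{\widetilde{y}^{\top}\widetilde{y}} \;\geqslant\; \min_{0 \neq y \in W} \frac{y^{\top} L(\Delta) y}{y^{\top} y}.
\]
Taking the maximum over all $i$-dimensional $W \subseteq \mathbb{R}^{V(\Delta)}$ delivers $\mu_i(\Gamma) \geqslant \mu_i(\Delta)$.

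The only step requiring real attention is the bookkeeping in the non-induced case: one must remember to retain the squared terms coming from edges of $\Gamma$ spanning $V(\Delta)$ but absent from $\Delta$, and discard them only as nonnegative contributions; this is precisely the reason $\Delta$ being non-induced causes no damage. No substantive obstacle arises. An equivalent operator-theoretic route is to apply Weyl's inequality to the decomposition $L(\Gamma) = \sum_{e \in E(\Delta)} L_e + \sum_{e \in E(\Gamma) \setminus E(\Delta)} L_e$ into rank-one positive semidefinite edge Laplacians: the first summand, regarded as an $n \times n$ matrix, has eigenvalues equal to those of $L(\Delta)$ padded with additional zeros, so Weyl gives the same inequality $\mu_i(\Gamma) \geqslant \mu_i(\Delta)$.
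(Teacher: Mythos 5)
Your argument is correct, and it is essentially the standard proof: the paper itself gives no proof of this lemma but cites \cite[Section 2]{BH}, where the result is obtained in just this way, from the positive semidefiniteness of the edge Laplacians $L_e$ (equivalently, of $L(\Gamma)-L(\Delta)$ after zero-padding) combined with Courant--Fischer or Weyl's inequality. Both of your routes are sound, including the bookkeeping for the non-induced and non-spanning cases, so there is nothing to add.
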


We will use the phrase ``by interlacing'' to refer to either of the above lemmas.

In our first result we classify the graphs for which $\mu_m=d_m-m+2=0$ for some $m$.

\begin{theorem}\label{thm:0}
Let $\Gamma$ be a graph with $n$ vertices, and let $m \in \{1,\dots,n \}$.
Suppose that $\mu_m=d_m-m+2=0$.
Then one
of the following holds.
\begin{enumerate}
\item $m=2$ and $\Gamma$ is $nK_1$.

\item $m=3$ and $\Gamma$ is $2K_2\cup(n-4)K_1$.

\item $\Gamma$ is $\overline{sK_1\cup tK_2}\cup(n-m)K_1$ for some
    $s,t\in\Z$ with $t>0$ and $s\geqslant 0$.
\end{enumerate}
\end{theorem}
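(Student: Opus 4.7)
The plan is to combine two standard facts: the multiplicity of $0$ in $\operatorname{Spec}(L(\Gamma))$ equals the number of connected components of $\Gamma$, so $\mu_m=0$ forces $\Gamma$ to have at least $n-m+1$ components; and $d_m=m-2$ bounds the top $m$ vertex degrees from below. Let $k$ denote the number of isolated vertices of $\Gamma$, and let $\Gamma_1,\dots,\Gamma_{c'}$ be the non-trivial components, with sizes $n_1\geqslant\dots\geqslant n_{c'}\geqslant 2$. Writing $N:=n-k=\sum_i n_i$, the component count becomes $N-c'\leqslant m-1$, while each $n_i\geqslant 2$ yields $c'\leqslant N/2$.

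The case $m=2$ is immediate: $d_2=0$ implies no vertex has a neighbour, so $\Gamma=nK_1$, giving case (1). Assume henceforth $m\geqslant 3$, so $d_m=m-2\geqslant 1$ and hence $N\geqslant m$. In particular some vertex has degree at least $m-2$ and must lie in a non-trivial component of size at least $m-1$; after relabelling, $n_1\geqslant m-1$. Combining $N\geqslant n_1+2(c'-1)$ with $N\leqslant c'+m-1$ produces $c'\leqslant 2$.

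If $c'=1$, then $N=m$ and the unique non-trivial component $H$ has $m$ vertices satisfying $\delta(H)=m-2$ and $\Delta(H)\leqslant m-1$. Hence $\overline{H}$ has maximum degree at most $1$, so $\overline{H}=sK_1\cup tK_2$ with $s+2t=m$; the requirement $d_m=m-2\neq m-1$ rules out $H=K_m$, forcing $t\geqslant 1$, which places $\Gamma$ into case (3). If $c'=2$, then $N\leqslant m+1$ while $n_1+n_2\geqslant m+1$, so $n_1=m-1$, $n_2=2$, and $\Gamma_2=K_2$. At least one of the two degree-$1$ vertices of $\Gamma_2$ then survives into the top $m$ of the sorted degree sequence, forcing $d_m\leqslant 1$; combined with $d_m=m-2$ this gives $m=3$, whence $\Gamma_1=K_2$ and $\Gamma=2K_2\cup(n-4)K_1$, yielding case (2).

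The only real obstacle is the bookkeeping in the $c'=2$ branch: there are $m+1$ positive-degree vertices, only one of which can be excluded when selecting the top $m$, so the two vertices of $\Gamma_2$ guarantee a degree-$1$ contribution to $d_m$, which collapses $c'=2$ onto $m=3$. All other steps are straightforward inequalities involving component counts, degree bounds, and the elementary structural fact that a graph with maximum degree at most $1$ is a disjoint union of $K_1$'s and $K_2$'s.
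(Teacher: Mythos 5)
Your proof is correct and follows essentially the same strategy as the paper's: use the fact that the multiplicity of the Laplacian eigenvalue $0$ equals the number of connected components to force at least $n-m+1$ components, then play this against the size of the component containing a vertex of degree $d_m=m-2$ and case-split on the resulting component structure. The paper parametrises by the total component count $c\in\{n-m+1,n-m+2\}$ while you parametrise by the number $c'\leqslant 2$ of non-trivial components, but the underlying case analysis is the same.
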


We define an \textbf{$m$-nexus} of a graph $\Gamma$ without isolated vertices
to be an $m$-subset $S$ of $V(\Gamma)$ such that each vertex in $S$ has degree at least $d_m$ and every edge of $\Gamma$ has a nontrivial intersection with $S$.

Let $S$ be an $m$-subset of $V(\Gamma)$ having largest degrees, 
i.e., $\deg v \geqslant d_m(\Gamma)$ for all $v \in S$.
If $\Gamma$ has no isolated vertices and $S$ is not an $m$-nexus,
then there exists an edge $e$ of $\Gamma$ satisfying 
$e \cap S = \emptyset$. 
Let $\Delta$ be the graph obtained from $\Gamma$ by deleting $e$.
Then $S$ is an $m$-subset of $V(\Delta)$ having largest degrees.
If $\Gamma$ satisfies $\mu_m(\Gamma) = d_m(\Gamma)-m+2$,
then $\Gamma\neq K_m\cup K_2\cup(n-m-2)K_1$, and hence
$\Delta\neq K_m\cup(n-m)K_1$. Thus, 
\[
	\mu_m(\Gamma) \geqslant \mu_m(\Delta) \geqslant d_m(\Delta)-m+2 = d_m(\Gamma)-m+2,
\]
where the left and right inequalities follow from Lemma~\ref{lem:interlacing2} and Theorem~\ref{thm:BH} respectively.
This forces $\mu_m(\Delta) = d_m(\Delta) - m + 2$.
Hence we see that any graph $\Gamma$ with $\mu_m(\Gamma) = d_m(\Gamma)-m+2$ 
can be obtained by adding edges to a graph $\Delta$ having an $m$-nexus and $\mu_m(\Delta) = d_m(\Delta) - m + 2$.
The following theorem characterizes such graphs $\Delta$.

\begin{theorem}\label{thm:class1}
	Let $m \in \{1,\dots,n \}$, let $\Gamma$ be a graph 
with $n$ vertices
having an $m$-nexus, and suppose that $\mu_m = d_m - m + 2 \geqslant 1$.
	Then one of the following must hold.
	\begin{enumerate}
		\item $\mu_m = 1$ and $\Gamma$ is $K_m$ with $p$ pendant vertices attached to a vertex of $K_m$;
		\item $\mu_m \geqslant 2$ and $\Gamma$ is $K_m$ with $\mu_m-1$ pendant vertices attached to each vertex of $K_m$;
		\item $m=2$, $\mu_m = d \geqslant 2$, and $\Gamma$ is $K_{2,d}$.
	\end{enumerate}
\end{theorem}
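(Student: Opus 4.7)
The plan is to exploit the decomposition $V(\Gamma) = S \sqcup T$, where $T := V(\Gamma) \setminus S$ is an independent set because the $m$-nexus $S$ is a vertex cover. Set $d := d_m$ and $r := \mu_m - 1 = d - m + 1 \geq 0$. By the $m$-nexus hypothesis each $v \in S$ has $\deg_\Gamma(v) \geq d$, while each $u \in T$ has all of its neighbors in $S$ and at least one such neighbor, so $\deg_\Gamma(u) \geq 1$.

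First I would extract information from the $m \times m$ principal submatrix $L_S$ of $L(\Gamma)$ indexed by $S$. Cauchy interlacing (Lemma~\ref{lem:interlacing1}) yields $\lambda_m(L_S) \leq \mu_m(\Gamma) = r+1$. A matching lower bound comes from the decomposition
\[
L_S = (D_S - dI) + (dI - A_{\Gamma[S]}),
\]
where both summands are positive semi-definite (the first by the $m$-nexus condition, the second because $\lambda_1(A_{\Gamma[S]}) \leq m-1$); Weyl's inequality gives $\lambda_m(L_S) \geq r$, trapping $\lambda_m(L_S) \in [r, r+1]$. Symmetric interlacing with the diagonal principal submatrix $L_T$ of $L(\Gamma)$ on $T$ controls the lower part of the spectrum by the maximum degree in $T$.

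To force structural constraints I would pass to the equitable partition of $V(\Gamma)$ whose parts are the singletons $\{v\}$ for $v \in S$ together with the classes $T_X := \{u \in T : \Gamma(u) = X\}$ for nonempty $X \subseteq S$. The spectrum of $L(\Gamma)$ then splits as the eigenvalues of an $(m+k)\times(m+k)$ quotient matrix (with $k$ the number of nonempty $T_X$) together with values $|X|$, each of multiplicity $|T_X|-1$, arising from within-class zero-sum eigenvectors. Juxtaposing the narrow window for $\lambda_m(L_S)$ with this decomposition and the equality $\mu_m(\Gamma) = r+1$ should pin down the possible isomorphism types of $\Gamma[S]$: either $\Gamma[S] \cong K_m$ (leading to cases~(1) and~(2)), or, only when $m=2$, $\Gamma[S] \cong \overline{K_2}$ (leading to case~(3)).

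Once $\Gamma[S]$ is determined, I would finish by examining the bipartite structure between $S$ and $T$ via the eigenvector equation $L(\Gamma)f = (r+1)f$. When $\Gamma[S] = K_m$, restricting this equation to $T$ together with the constraints from $L_T$ should force every $u \in T$ to be a pendant, after which a counting or eigenvector-support argument distinguishes $r = 0$ (pendants concentrating at one vertex of $K_m$, case~(1)) from $r \geq 1$ (exactly $r$ pendants at each vertex of $K_m$, case~(2)). When $m=2$ and $\Gamma[S] = \overline{K_2}$, the nexus condition together with the minimum-degree requirement $\deg_\Gamma(v) \geq d$ for $v \in S$ jointly force both vertices of $S$ to be adjacent to every vertex of $T$, so $\Gamma = K_{2,d}$. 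The main obstacle I anticipate is the forcing step in the previous paragraph: ruling out intermediate possibilities for $\Gamma[S]$ beyond $K_m$ or (when $m=2$) $\overline{K_2}$ appears to require the careful simultaneous use of the two interlacing estimates, the explicit list of non-quotient eigenvalues, and the degree-based constraints on $L_T$.
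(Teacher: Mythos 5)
Your outline sets up the same general machinery as the paper (interlacing against $L_S$, a lower bound from a positive semidefinite splitting of $L_S$, and equitable/quotient partitions refining $S\sqcup T$), but the actual content of the theorem is concentrated precisely in the two steps you leave at the level of ``should pin down'' and ``should force'', and neither goes through as described. The window $\lambda_m(L_S)\in[r,r+1]$ is far too weak to determine $\Gamma[S]$: for instance with $m=3$, $d_3=3$, $r=1$, take $\Gamma[S]$ to be a path $P_3$ with every vertex of $S$ of degree $3$ in $\Gamma$; then $\lambda_3(L_S)=3-\sqrt{2}\in[1,2]$, so your test does not exclude it. The paper eliminates such configurations by a genuinely different mechanism: it splits into cases according to whether some vertex of $S$ has exactly $e=d_m-m+1$ neighbours outside $S$ (Lemmas~\ref{lem:6a} and~\ref{lem:6b}), deletes edges to normalise the outside-degrees, and then shows by explicit computation that $e+1$ is \emph{not} an eigenvalue of a suitable quotient matrix (the inequality $(A-2r(e+1))^2-B>0$ in Lemma~\ref{lem:6b}, and the kernel computation in Lemma~\ref{lem:6a}), so that interlacing would force $\mu_m>e+1$. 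Your proposal contains no substitute for these computations, and they are where the classification actually happens.

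There is also a concrete error in your treatment of case (3): you assert that when $m=2$ and $\Gamma[S]=\overline{K_2}$, the nexus condition together with $\deg v\geqslant d$ for $v\in S$ forces $\Gamma=K_{2,d}$. It does not. The graph obtained from $K_{2,d}$ by attaching a pendant vertex to one of the two degree-$d$ vertices still has $S$ as a $2$-nexus with both vertices of degree at least $d_2=d$ and every edge meeting $S$; it is ruled out only spectrally, which the paper does via the determinant computations $\det(dI-L(\Gamma))\neq 0$ in Lemmas~\ref{lem:7a5}--\ref{lem:7a}. The analogous gap occurs in your $K_m$ cases, where ``restricting the eigenvector equation to $T$ should force every $u\in T$ to be a pendant'' is again exactly the assertion that needs a quantitative argument (in the paper, another quotient-matrix eigenvalue exclusion). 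As it stands the proposal is a plausible plan, not a proof.
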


\begin{figure}[htbp]
	\begin{center}
		\begin{tikzpicture}[place/.style={draw,fill=black,shape=circle,inner sep=2pt},
			transition/.style={}, scale=1.5]
		\begin{scope}[scale=1.5,rotate=90,xshift=-0.2cm]

			\foreach \x in {0,...,6}
				\node (v\x) [place] at (\x*51:0.4) {};
			\foreach \x in {0,...,6}
			 {
		 		\foreach \y in {\x,...,6}
					\draw (v\x) -- (v\y);
			 }

			\node (v7) [place] at (-15:0.8) {};
			\node (v8) [place] at (0:0.8) {};
			\node (v9) [place] at (15:0.8) {};
			\foreach \x in {7,8,9}
			    \draw (v0) -- (v\x);
		\end{scope}
		\begin{scope}[scale=1.5]
			\node (t2) [inner sep=0.1pt] at (0,-1) {(1) $m=7$ and $p = 3$};
		\end{scope}
		\begin{scope}[xshift=3.2cm,scale=1.5]
			\foreach \x in {0,...,5}
			    \node (v\x) [place] at (\x*60:0.4) {};
			\foreach \x in {0,...,5}
			 {
		 		\foreach \y in {\x,...,5} 
					\draw (v\x) -- (v\y);
			 }
			\foreach \x in {6,...,17}
			    \node (v\x) [place] at (\x*30+15-180:0.8) {};
			\foreach \x in {0,60,...,300}
			    \draw (\x:0.4) -- (\x+15:0.8);
			\foreach \x in {0,60,...,300}
			    \draw (\x:0.4) -- (\x-15:0.8);
			\node (t2) [inner sep=0.1pt] at (0,-1) {(2) $m=6$ and $\mu_m=3$};
		\end{scope}

		\begin{scope}[xshift=5.7cm,scale=1.5]
			\node (v0) [place] at (.25,.5) {};
			\node (v1) [place] at (.75,.5) {};
			\node (v2) [place] at (0,-0.5) {};
			\node (v3) [place] at (0.33,-0.5) {};
			\node (v4) [place] at (0.67,-0.5) {};
			\node (v5) [place] at (1,-0.5) {};
			\foreach \x in {2,3,4,5}
			    \draw (v0) -- (v\x);
			\foreach \x in {2,3,4,5}
			    \draw (v1) -- (v\x);
			\node (t2) [inner sep=0.1pt] at (0.4,-1) {(3) $d=4$};
		\end{scope}

		\end{tikzpicture}
	\end{center}
	
	\caption{Examples of each of the cases of Theorem~\ref{thm:class1}.}
	\label{fig:examples}
\end{figure}
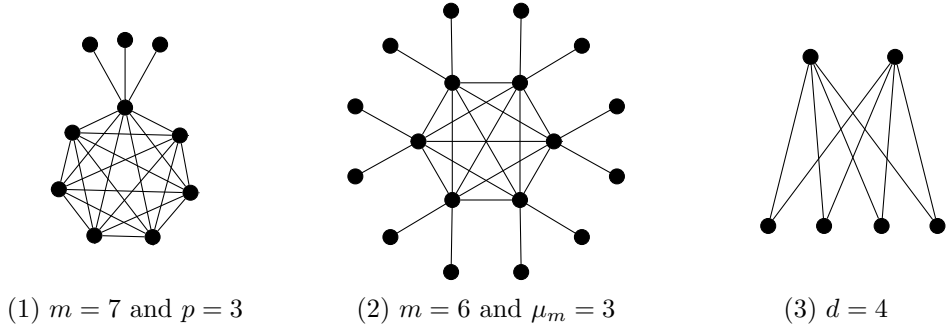

Theorem~\ref{thm:class1} is strengthening of Proposition 3 in \cite{BH1}.
We are interested in graphs for which $\mu_m = d_m-m+2$ for some $m$.
Given such a graph $\Gamma$, by Theorem~\ref{thm:class1} we know that we can delete a sequence of edges to obtain a graph given in Theorem~\ref{thm:class1}.

For the case $\mu_m = d_m-m+2 = 1$ we can give a complete classification:

\begin{theorem}\label{thm:1}
Let $\Gamma$ be a graph with $n$ vertices, and let $m \in \{1,\dots,n \}$.
Suppose that $\mu_m=d_m-m+2=1$.
Then $\Gamma$ is $K_m$ with $p$ pendant vertices attached to a vertex of $K_m$.
\end{theorem}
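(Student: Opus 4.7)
The plan is to reduce the statement to Theorem~\ref{thm:class1} via an edge-deletion argument and then rule out any extraneous edges through a forbidden-subgraph argument involving $K_m\cup K_2$. First I would observe that $m\geqslant 2$, since $m=1$ would force $d_1=0$ (a graph with no edges) and $\mu_1=1$ simultaneously, which is impossible. Let $S\subseteq V(\Gamma)$ be a set of $m$ vertices of largest degree, so that $\deg v\geqslant d_m=m-1\geqslant 1$ for every $v\in S$, and let $\Delta$ be the graph obtained from $\Gamma$ by deleting every edge whose endpoints both lie outside $S$.

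Because deleting such edges leaves $S$-degrees unchanged, $S$ is still a top-$m$ degree set of $\Delta$ and $d_m(\Delta)=m-1$. Lemma~\ref{lem:interlacing2} gives $\mu_m(\Delta)\leqslant\mu_m(\Gamma)=1$. I would then verify that $\Delta\neq K_m\cup(n-m)K_1$: if it were, then either $\Gamma=\Delta$, giving $\mu_m(\Gamma)=0$, or $\Gamma$ contains at least one non-$S$ edge $e$, in which case $\Gamma$ contains the subgraph $K_m\cup K_2$ (the $K_m$ on $S$ together with $e$), and Lemma~\ref{lem:interlacing2} forces $\mu_m(\Gamma)\geqslant\mu_m(K_m\cup K_2)=2$. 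Either case contradicts $\mu_m(\Gamma)=1$. Theorem~\ref{thm:BH} now yields $\mu_m(\Delta)\geqslant 1$, so $\mu_m(\Delta)=d_m(\Delta)-m+2=1$. After removing any isolated vertices from $\Delta$ to form $\Delta^*$, the set $S$ becomes an $m$-nexus of $\Delta^*$, and both $\mu_m(\Delta^*)=1$ and $d_m(\Delta^*)=m-1$ are preserved. Applying Theorem~\ref{thm:class1}, with cases (2) and (3) excluded by the hypothesis $\mu_m=1$, I conclude that $\Delta^*$ is $K_m$ with $p$ pendant vertices attached to a single vertex of $K_m$.

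The final step is to show that $\Gamma=\Delta$, i.e.\ that no edges were actually deleted. If some deleted edge $e$ existed, then $e$ would be vertex-disjoint from $S$, and since $\Delta^*\subseteq\Gamma$ induces $K_m$ on $S$, the graph $\Gamma$ would contain a copy of $K_m\cup K_2$ as a subgraph; Lemma~\ref{lem:interlacing2} would then give $\mu_m(\Gamma)\geqslant 2$, contradicting the hypothesis. Therefore $\Gamma=\Delta$, which is $K_m$ with $p$ pendants attached to a single vertex of $K_m$ (with any additional vertices isolated). The main obstacle I anticipate is the careful bookkeeping around isolated vertices, since the definition of $m$-nexus insists that they be absent; everything else follows mechanically from the two interlacing lemmas, Theorem~\ref{thm:BH}, Theorem~\ref{thm:class1}, and the elementary computation $\mu_m(K_m\cup K_2)=2$.
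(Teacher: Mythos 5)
Your proof is correct, and its first half --- delete every edge avoiding $S$, check the resulting graph is not $K_m\cup(n-m)K_1$, squeeze $\mu_m(\Delta)$ between Lemma~\ref{lem:interlacing2} and Theorem~\ref{thm:BH}, and invoke Theorem~\ref{thm:class1} --- is exactly the reduction the paper sets up before Theorem~\ref{thm:class1} and uses in its own proof. Where you genuinely diverge is in showing that $\Gamma$ has no edge disjoint from $S$. The paper handles this by first disposing of the disconnected case and then, in the connected case, pruning a neighbour $u$ of $S$ down to degree $2$ and computing the spectrum of a $3\times3$ quotient matrix (the cubic $f(x)$) to force $\mu_m>1$. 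You replace all of that with one observation: once Theorem~\ref{thm:class1} identifies $\Delta^*$, the set $S$ spans a clique, so any edge of $\Gamma$ disjoint from $S$ gives a vertex-disjoint subgraph $K_m\cup K_2$, whence $\mu_m(\Gamma)\geqslant\mu_m(K_m\cup K_2)=2$ by Lemma~\ref{lem:interlacing2}. This is shorter and avoids the polynomial computation entirely. The one step you should make explicit is why your top-$m$ degree set $S$ must coincide with the clique of $\Delta^*$: for $m\geqslant 3$ the pendant vertices have degree $1<m-1$, so this is immediate, and for $m=2$ the graph is a star whose top-$2$ degree sets all induce $K_2$; with that noted, the clique on $S$ survives in $\Gamma$ and the forbidden-subgraph argument goes through. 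Both your write-up and the paper leave the possible presence of isolated vertices slightly implicit; your parenthetical remark treats this at least as carefully as the original.
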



%
%
%


\section{Proofs of the main results} 
\label{sec:proofs}

In this section we prove our main results.
We begin with the proof of Theorem~\ref{thm:0}.

\begin{proof}[Proof of Theorem~\ref{thm:0}]
Let $c$ be the number of connected components of $\Gamma$.
Since the multiplicity of the Laplacian eigenvalue $0$ is larger than $n-m+1$,
we have $c\geqslant n-m+1$. 
Let $x_m$ be a vertex with degree equal to $d_m$.
Since $d_m=m-2$, the connected component $C$ containing $x_m$ satisfies $|C|\geqslant m-1$.
Hence we have $c\leqslant n-m+2$, which together with the lower bound gives $c \in \{ n-m+1, n-m+2 \}$.

First suppose that $c=n-m+1$. 
Then $|C| \in \{m-1,m\}$.
If $|C|=m-1$ then there are $n-m+1$ vertices outside
$C$ and they constitute $n-m$ components.
It follows that $d_m=1$ and so $m=d_m+2=3$.
This implies that $\Gamma$ is $2K_2\cup (n-4)K_1$.
Otherwise, if $|C|=m$ then there are $n-m$ vertices outside $C$ and they constitute $n-m$
components, so they are all isolated vertices.
Since
$d_1\geqslant \dots \geqslant d_m=m-2$, the graph $\Gamma$ is
$\overline{sK_1\cup tK_2}\cup(n-m)K_1$ for some $s,t\in\Z$ with $s\geqslant 0$ and $t>0$.

Finally suppose that $c=n-m+2$. 
Then $|C|=m-1$ since there are $n-m+1$ vertices
outside $C$. 
So $d_m=0$ and $m=d_m+2=2$. That is, $\Gamma = nK_1$.
\end{proof}

Let $\Gamma$ be a graph and let $S$ be a subset of the vertex set of $\Gamma$.
We write $L_S$ to denote the principal submatrix of $L(\Gamma)$ with rows and
columns indexed by $S$ and we write $L(S)$ to denote the Laplacian of the subgraph of $\Gamma$ induced on $S$.
We will use this notation in some of the lemmas below.

Next we need the following lemma.  
This lemma is a refined version of \cite[Lemma 2]{BH1}.

\begin{lemma}\label{lem:5a}
Let $\Gamma$ be a graph having a subset of $m > 0$ vertices $S$ such that
each vertex in $S$ has at least $e$ neighbours outside $S$.
Then $\mu_m\geqslant e$.
If equality holds, then $S$ is disconnected or $e=0$.
\end{lemma}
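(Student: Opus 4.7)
My plan is to invoke Interlacing~I (Lemma~\ref{lem:interlacing1}) on the principal submatrix $L_S$ of $L(\Gamma)$ indexed by $S$, and then to handle the equality case via a Rayleigh-quotient perturbation.

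Write $L_S = L(S) + D'$, where $L(S)$ is the Laplacian of the induced subgraph $\Gamma[S]$ and $D'$ is the diagonal matrix with $(i,i)$-entry equal to the number of neighbours of $i$ outside $S$. Both $L(S)$ and $D' - eI$ are positive semidefinite (the latter by hypothesis), so $L_S \succeq eI$ and in particular $\lambda_m(L_S) \geqslant e$. Interlacing~I then gives $\mu_m(\Gamma) \geqslant \lambda_m(L_S) \geqslant e$, proving the inequality.

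Now suppose $\mu_m = e$, and for the equality statement assume $\Gamma[S]$ is connected; I aim to conclude $e=0$. The inequality chain collapses to $\lambda_m(L_S) = e$, and for any unit eigenvector $v$ of $L_S$ at this eigenvalue we have $0 = v^T(L_S - eI)v = v^T L(S) v + v^T(D' - eI)v$. Both summands are nonnegative, so both vanish, forcing $L(S)v = 0$ and $(D'-eI)v = 0$. Connectedness of $\Gamma[S]$ gives $v = c\mathbf{1}_S$ with $c \neq 0$, and the second identity then yields $\deg_{S^c}(i) = e$ for every $i \in S$.

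It remains to rule out $e > 0$. To do this I would use Courant--Fischer with a perturbation of the subspace $V_S \subset \mathbb{R}^{V(\Gamma)}$ of vectors vanishing off $S$. On $V_S$ the Rayleigh quotient of $L(\Gamma)$ agrees with that of $L_S$, hence is $\geqslant e$ with equality exactly on scalar multiples of $\tilde{\mathbf{1}}_S$. Replacing $\tilde{\mathbf{1}}_S$ in $V_S$ by the tilted vector $g := \tilde{\mathbf{1}}_S + \alpha \mathbf{1}_{V(\Gamma)}$ for a small $\alpha < 0$ yields a new $m$-dimensional subspace $W$. A direct expansion of $u^T L u - e\|u\|^2$ for a generic $u = \tilde{v}' + \beta g \in W$ (with $v' \perp \mathbf{1}_S$ in $\mathbb{R}^S$), using the equitable relation $\deg_{S^c}(i) = e$, separates this quantity into $v'^T L(S) v'$, which is strictly positive unless $v' = 0$ because $L(S)$ is positive definite on $\mathbf{1}_S^\perp$, plus $-e\alpha\beta^2(2m + n\alpha)$, which is strictly positive unless $\beta = 0$ provided $e > 0$ and $|\alpha|$ is small. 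Hence $R(u) > e$ on $W \setminus \{0\}$, so Courant--Fischer forces $\mu_m > e$, contradicting equality.

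The main obstacle is this last step. The interlacing/matrix-decomposition argument alone only forces the equitable condition $\deg_{S^c}(i) = e$ for all $i \in S$, and the vector $\tilde{\mathbf{1}}_S$ still attains Rayleigh quotient exactly $e$ relative to $L(\Gamma)$. Separating $e = 0$ from $e > 0$ requires exploiting that $\tilde{\mathbf{1}}_S$ fails to be a genuine eigenvector of $L(\Gamma)$ precisely when $e > 0$, which is what the transverse perturbation in the direction $\mathbf{1}_{V(\Gamma)}$ accomplishes.
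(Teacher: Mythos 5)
Your proof is correct, and while the inequality and the first half of the equality analysis (the decomposition $L_S=L(S)+D'$, positive semidefiniteness, and the deduction that $L(S)v=0$ and $D'=eI$ when $\Gamma[S]$ is connected) coincide with the paper's argument, your final step takes a genuinely different route. The paper rules out $e>0$ by forming the quotient matrix $Q$ of $L(\Gamma)$ with respect to the partition $\{\{s\}\mid s\in S\}\cup\{V(\Gamma)\setminus S\}$, showing by a direct eigenvector computation that $e\notin\operatorname{Spec}(Q)$, and combining this with $\lambda_m(Q)\geqslant\lambda_m(L_S)=e$ to force $\mu_m\geqslant\lambda_m(Q)>e$. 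You instead perturb the Courant--Fischer test subspace, tilting $\tilde{\mathbf{1}}_S$ to $g=\tilde{\mathbf{1}}_S+\alpha\mathbf{1}$ with small $\alpha<0$; I checked your key identity
\[
u^{\top}Lu-e\|u\|^{2}=v'^{\top}L(S)v'-e\alpha\beta^{2}(2m+n\alpha),
\]
and both terms are indeed nonnegative with the stated strictness, so compactness of the unit sphere of $W$ gives $\mu_m>e$. Both arguments exploit the same structural facts ($D'=eI$ and positive definiteness of $L(S)$ on $\mathbf{1}^{\perp}$), but yours avoids quotient-matrix interlacing entirely in favour of a self-contained variational computation, while the paper's trades that computation for a short linear-algebra argument about the spectrum of $Q$. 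One small point worth making explicit in your write-up: $e>0$ forces $S\neq V(\Gamma)$ (so the direction $\mathbf{1}_{V(\Gamma)}$ is genuinely transverse to $V_S$); the paper states this, and your argument uses it implicitly.
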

\begin{proof}
Firstly we have
\[L_S=L(S)+D,\]
where $D$ is the diagonal matrix with
\[D_{ss}=|\Gamma(s)\setminus S|\geqslant e\quad (s\in S).\]
Since $L(S)$ is positive semidefinite, by \cite[Theorem~2.8.1(iii)]{BH}, we have
\[\lambda_m(L(S)+D)\geqslant\lambda_m(D).\]
Now
\begin{align*}
\mu_m &=\lambda_m(L(\Gamma)) \\
&\geqslant \lambda_m(L_S)
&&\text{(by interlacing)} \\
&= \lambda_m(L(S)+D) \\
&\geqslant
\lambda_m(D) \\
&= \min\{D_{ss}\mid s\in S\} \\
&\geqslant e.
\end{align*}

Note that, equality holds if
and only if
\begin{align}
\lambda_m(L(\Gamma))&=\lambda_m(L_S),\notag\\
\lambda_m(L(S)+D)&=\lambda_m(D),\label{2133i2}\\
\exists s\in S,&\;|\Gamma(s)\setminus S|=e.\label{2133i3}
\end{align}
Suppose that these conditions hold. Then $\lambda_m(D)=e$
by (\ref{2133i3}), so there exists a nonzero vector $\mathbf{u}$ with
$(L(S)+D)\mathbf{u}=e\mathbf{u}$ by (\ref{2133i2}). We may assume without loss
of generality $\mathbf{u}^*\mathbf{u}=1$, and then
\begin{align*}
e&=\mathbf{u}^*(e\mathbf{u}) \\
&= \mathbf{u}^*(L(S)+D)\mathbf{u} \\
&= \mathbf{u}^*L(S)\mathbf{u}+\mathbf{u}^*D\mathbf{u} \\
&\geqslant \mathbf{u}^*D\mathbf{u}
\\&\geqslant
e.
\end{align*}
This forces $\mathbf{u}^*L(S)\mathbf{u}=0$, and hence $L(S)\mathbf{u}=\mathbf{0}$, since
$L(S)$ is positive semidefinite.

Suppose that $S$ is connected and $e>0$.
The latter implies $\mu_m=e>0$, hence $m<n$. 
Thus $S\neq V(\Gamma)$.
Set $r=|V(\Gamma)\setminus S|>0$. Since $S$ is connected, $L(S)\mathbf{u}=\mathbf{0}$ implies
$\mathbf{u}=\frac{1}{\sqrt{m}}\mathbf{1}$. Then $\mathbf{u}^*D\mathbf{u}=e$ implies
$D=eI$. Let $Q$ be the quotient matrix of $L(\Gamma)$
with respect to the partition of $V(\Gamma)$ into
$m+1$ parts:
\[
	\left \{ \{ s \} \mid s \in S \right \} \cup \left \{ V(\Gamma)\setminus S \right \}.
\]
Then
\[Q=\begin{pmatrix}L_S&-e\mathbf{1}\\ -\frac{e}{r}\mathbf{1}^\top&
\frac{em}{r}\end{pmatrix},\]
and, by interlacing,
\begin{equation}\label{2133h}
\mu_m\geqslant\lambda_m(Q)\geqslant\lambda_m(L_S)=e.
\end{equation}

We claim $e\not\in\operatorname{Spec}(Q)$. 
Indeed, suppose
\[Q\begin{pmatrix}\mathbf{v}\\ c\end{pmatrix}=
e\begin{pmatrix}\mathbf{v}\\ c\end{pmatrix},\]
where $\mathbf{v}\in\mathbb{R}^m$, $c\in\mathbb{R}$. 
Then
\begin{align}
L_S\mathbf{v}-ce\mathbf{1}&=e \mathbf{v}, \label{2133h1} \\ 
-\frac{e}{r}\mathbf{1}^\top \mathbf{v}+\frac{cem}{r}&=ce. \label{2133h2}
\end{align}
Since $L_S=L(S)+D=L(S)+eI$, equation~\eqref{2133h1} implies
$L(S)\mathbf{v}=ce\mathbf{1}$. 
Now
\begin{align*}
0&=\mathbf{1}^\top L(S)\mathbf{v}\\
&=\mathbf{1}^\top(ce\mathbf{1}) \\
&= cem.
\end{align*}
This implies $c=0$, and hence $L(S)\mathbf{v}=0$, while
equation~\eqref{2133h2} implies $\mathbf{1}^\top \mathbf{v}=0$.
Since $S$ is connected, we obtain $\mathbf{v}=\mathbf{0}$. 
Therefore, we have
proved the claim. Now equation~\eqref{2133h} implies $\mu_m>e$. This is
a contradiction.
\end{proof}


\begin{lemma}\label{lem:7a5}
Let $c$ and $d$ be positive integers.
Let $\G$ be the graph obtained from $K_{2,d}$ by attaching
$c$ pendant vertices to each of the two non-adjacent vertices 
of degree $d$.
Then $c+d\notin\operatorname{Spec}(L(\G))$.
\end{lemma}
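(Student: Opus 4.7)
My plan is to argue by contradiction: suppose that $c+d$ is a Laplacian eigenvalue of $\G$ with eigenvector $\mathbf{x}$, and show that the coordinate-wise form of $L(\G)\mathbf{x}=(c+d)\mathbf{x}$ forces $\mathbf{x}=\mathbf{0}$. Denote the two non-adjacent degree-$(c+d)$ vertices of $\G$ by $u_1,u_2$, the common neighbours arising from the $K_{2,d}$ by $v_1,\dots,v_d$, and the pendants attached to $u_i$ by $w_1^{(i)},\dots,w_c^{(i)}$. Setting $a_i=x_{u_i}$, the eigenvalue equations at each vertex type read
\begin{align*}
(2-(c+d))\,x_{v_j} &= a_1+a_2, \\
(1-(c+d))\,x_{w_k^{(i)}} &= a_i, \\
\sum_{j=1}^{d} x_{v_j}+\sum_{k=1}^{c} x_{w_k^{(i)}} &= 0 \quad (i=1,2).
\end{align*}

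In the main case $c+d\neq 2$ (so also $c+d\neq 1$, since $c,d\geqslant 1$), the first two families of equations pin down a common value $b := -(a_1+a_2)/(c+d-2)$ for every $x_{v_j}$ and, for each $i$, a common value $p_i := -a_i/(c+d-1)$ for every $x_{w_k^{(i)}}$. Substituting into the $u_i$-equations gives $db+cp_i=0$ for $i=1,2$; taking the difference forces $p_1=p_2$, whence $a_1=a_2=:a$. Back-substitution reduces the problem to the single scalar equation
\[
a\cdot\frac{2d(c+d-1)+c(c+d-2)}{(c+d-2)(c+d-1)}=0,
\]
and the key observation is the factorisation $2d(c+d-1)+c(c+d-2)=(c+d)(c+2d-2)$, both of whose factors are strictly positive for $c,d\geqslant 1$. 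This forces $a=0$, hence $b=0$ and $p_i=0$, giving $\mathbf{x}=\mathbf{0}$.

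The degenerate subcase $c+d=2$, i.e.\ $c=d=1$, must be handled separately: here the $v$-equation collapses to $a_1+a_2=0$, so $x_{v_1}$ becomes a free parameter, but the $w$- and $u$-equations then yield $x_{v_1}=a_1=a_2=-a_1$, which together force every coordinate of $\mathbf{x}$ to vanish. The only non-routine step, and the main obstacle, is spotting the factorisation $(c+d)(c+2d-2)$ which makes the positivity of the scalar constraint transparent; without it one would have to verify the non-vanishing by a more tedious case analysis on the sizes of $c$ and $d$. Everything else is a direct manipulation of the eigenvalue equations.
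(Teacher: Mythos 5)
Your proof is correct. It establishes the same fact as the paper but by a slightly different route: the paper simply asserts the closed-form determinant
\[
\det\bigl((c+d)I-L(\G)\bigr)=c(c+d)(c+2d-2)(c+d-1)^{2(c-1)}(c+d-2)^{d-1}\neq 0
\]
for $c+d\geqslant 3$ (disposing of $c=d=1$, the path on five vertices, separately), whereas you show directly that the kernel of $(c+d)I-L(\G)$ is trivial by solving the eigenvalue equations coordinatewise. The two arguments are close relatives --- your factors $(c+d)(c+2d-2)$ and your denominators $(c+d-1)$, $(c+d-2)$ are exactly the factors appearing in the paper's determinant --- but your version has the advantage of being fully self-verifying, since the paper leaves the determinant computation to the reader, while the paper's version is more compact and records the full characteristic-polynomial factorisation at $c+d$. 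Your handling of the degenerate case $c+d=2$ mirrors the paper's separate treatment of $c=d=1$. All the individual steps (the common values $b$ and $p_i$, the deduction $a_1=a_2$ from subtracting the two $u_i$-equations, and the factorisation $2d(c+d-1)+c(c+d-2)=(c+d)(c+2d-2)$) check out.
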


\begin{proof}
If $c=d=1$, then $\G$ is the path of length $4$, so the
proof is straightforward. Assume $c+d\geqslant3$.
Then
\[\det\left((c+d)I-L(\G)\right)=
c(c+d)(c+2d-2)(c+d-1)^{2(c-1)}(c+d-2)^{d-1}
\neq0.\]
\end{proof}

\begin{lemma}\label{lem:7a6}
Let $d$ be an integer with $d\geqslant2$, and
let $\G$ be the graph obtained from $K_{2,d}$ by attaching a
pendant vertex to a vertex of degree $d$. Then
$\mu_2(\G)>d$.
\end{lemma}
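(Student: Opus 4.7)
The plan is to exploit the symmetry of $\Gamma$ via an equitable partition. Label the two degree-$d$ vertices of the underlying $K_{2,d}$ by $a,b$, the $d$ vertices of the opposite part by $v_1,\dots,v_d$, and the pendant vertex attached to $a$ by $p$, so that $\deg a=d+1$. The partition
\[
V(\Gamma) = \{a\}\cup\{b\}\cup\{p\}\cup\{v_1,\dots,v_d\}
\]
is equitable, and its Laplacian quotient matrix is
\[
Q = \begin{pmatrix} d+1 & 0 & -1 & -d \\ 0 & d & 0 & -d \\ -1 & 0 & 1 & 0 \\ -1 & -1 & 0 & 2 \end{pmatrix}.
\]
The eigenvalues of $Q$ lie in $\operatorname{Spec}(L(\Gamma))$, and the complementary eigenvectors---those supported on $\{v_1,\dots,v_d\}$ with coordinate sum zero---are easily checked to be eigenvectors of $L(\Gamma)$ for the eigenvalue $2$, accounting for multiplicity $d-1$.

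The main step is the evaluation $\det(dI-Q)=d^2$. The second row of $dI-Q$ has a single nonzero entry, namely $d$ in the last column, and the resulting $3\times 3$ minor again has a column with a unique nonzero entry, so the determinant collapses in two cofactor expansions.

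Next, apply the Grone--Merris bound (the $m=1$ case of Theorem~\ref{thm:BH}) to obtain $\mu_1(\Gamma)\geqslant d_1(\Gamma)+1 = d+2$. Since the symmetric part of the spectrum contributes only the eigenvalue $2\leqslant d$, the maximum $\mu_1(\Gamma)$ is attained by $Q$, so $\lambda_1(Q)\geqslant d+2>d$. Writing
\[
d^2 \;=\; \det(dI-Q) \;=\; d \cdot (d-\lambda_1(Q))(d-\lambda_2(Q))(d-\lambda_3(Q))
\]
(and using $\lambda_4(Q)=0$), the left-hand side is positive while $d-\lambda_1(Q)<0$, forcing $(d-\lambda_2(Q))(d-\lambda_3(Q))<0$. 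Since $\lambda_2(Q)\geqslant\lambda_3(Q)$, this gives $\lambda_2(Q)>d>\lambda_3(Q)$.

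Finally, as $\lambda_2(Q)>d\geqslant 2$ dominates every contribution from the symmetric part, the second largest eigenvalue of $L(\Gamma)$ equals $\lambda_2(Q)$, whence $\mu_2(\Gamma)=\lambda_2(Q)>d$. The only real obstacle is the determinant computation; once $\det(dI-Q)=d^2$ is in hand, the conclusion follows from the Grone--Merris bound together with a short sign analysis.
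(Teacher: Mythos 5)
Your proof is correct, and it reaches the conclusion by a genuinely different route in its final step. The paper's proof is terser: it states (for $d>2$) that $\det\left(dI-L(\G)\right)=d^2(d-2)^{d-1}\neq0$, so $d\notin\operatorname{Spec}(L(\G))$, and then combines this with $\mu_2(\G)\geqslant\mu_2(K_{2,d})=d$ (interlacing, Lemma~\ref{lem:interlacing2}) to get strict inequality; the case $d=2$, where the formula vanishes, is handled separately via the $4$-cycle. You instead work with the $4\times4$ quotient matrix $Q$ of the equitable partition (which is where the factorisation $d^2(d-2)^{d-1}=\det(dI-Q)\cdot(d-2)^{d-1}$ comes from anyway), verify $\det(dI-Q)=d^2>0$, pin down $\lambda_1(Q)\geqslant d+2$ via the Grone--Merris bound and $\lambda_4(Q)=0$ via $Q\mathbf{1}=\mathbf{0}$ together with positive semidefiniteness, and then read off $\lambda_2(Q)>d>\lambda_3(Q)$ from the sign of the product $(d-\lambda_2)(d-\lambda_3)$. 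All the computations check out (the complementary eigenvectors do give the eigenvalue $2$ with multiplicity $d-1$, and the two cofactor expansions are right). What your version buys: it avoids any appeal to $\mu_2(K_{2,d})=d$, it actually locates $\lambda_2$ rather than merely excluding $d$ from the spectrum, and it treats $d=2$ uniformly since $\det(dI-Q)=d^2$ never vanishes; the cost is a slightly longer sign analysis where the paper gets away with one line. The only point worth making explicit is why $\lambda_4(Q)=0$ exactly (connectedness of $\G$ gives $0$ as a simple eigenvalue of $L$, and all eigenvalues of $Q$ are nonnegative since they lie in $\operatorname{Spec}(L)$), but this is routine.
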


\begin{proof}
If $d=2$, then $K_{2,2}$ is the $4$-cycle, so the proof is
straightforward. 
Assume $d>2$. Then
\[\det\left(dI-L(\G)\right)=d^2(d-2)^{d-1}\neq0.\]
Thus $d\notin\operatorname{Spec}(L(\G))$. In particular, $\mu_2(\G)\neq d$.
Since 
$\mu_2(\G)\geqslant\mu_2(K_{2,d})=d$ by interlacing,
we obtain $\mu_2(\G)>d$.
\end{proof}

\begin{lemma}\label{lem:7a}
	Let $\Gamma$ be a graph having $2$-nexus $S=\{x_1,x_2\}$ where $x_1 \not \sim x_2$ and $\deg x_1 \geqslant \deg x_2 \geqslant 1$.
	Suppose that $\mu_2 = d = \deg x_2 $.
Then $\Gamma=K_{2,d}$.
\end{lemma}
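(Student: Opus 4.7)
The plan is to exploit the bipartite structure forced by the 2-nexus hypothesis: since $S=\{x_1,x_2\}$ is a 2-nexus with $x_1\not\sim x_2$, every edge of $\Gamma$ has exactly one endpoint in $S$, so $\Gamma$ is bipartite with parts $S$ and $V(\Gamma)\setminus S$, and every vertex outside $S$ lies in $\Gamma(x_1)\cup\Gamma(x_2)$. I would set $a = |\Gamma(x_1)\cap \Gamma(x_2)|$, $b_1 = |\Gamma(x_1)\setminus \Gamma(x_2)|$ and $b_2 = |\Gamma(x_2)\setminus \Gamma(x_1)|$, so that $\deg x_2 = a+b_2 = d$ and $\deg x_1 = a+b_1\geqslant d$; in particular $b_1\geqslant b_2$. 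The desired conclusion $\Gamma = K_{2,d}$ is equivalent to $b_1=b_2=0$, and I would argue by contradiction, splitting into Case~A where $b_2 = 0$ (forcing $b_1\geqslant 1$) and Case~B where $b_2\geqslant 1$.

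In Case~A, $\Gamma$ contains as a (not necessarily induced) subgraph the graph obtained from $K_{2,d}$ by attaching a single pendant to $x_1$. When $d\geqslant 2$, Lemma~\ref{lem:7a6} says this subgraph already has second Laplacian eigenvalue strictly greater than $d$, and Lemma~\ref{lem:interlacing2} therefore gives $\mu_2(\Gamma) > d$, contradicting $\mu_2 = d$. For $d=1$ the subgraph is simply $P_4$, for which $\mu_2(P_4) = 2 > 1 = d$, so the same interlacing step closes this case too.

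In Case~B, $b_1 \geqslant b_2 \geqslant 1$, and I would form a subgraph $\Delta \subseteq \Gamma$ by deleting $b_1 - b_2$ vertices from $\Gamma(x_1)\setminus\Gamma(x_2)$ together with their incident edges, making $\deg_\Delta x_1 = \deg_\Delta x_2 = a+b_2 = d$. If $a\geqslant 1$, then $\Delta$ is precisely the graph of Lemma~\ref{lem:7a5} with the lemma's $c$ equal to $b_2$ and its $d$ equal to $a$, so that lemma yields $d = a+b_2\notin\operatorname{Spec}(L(\Delta))$. Since $d\geqslant 2$ in this subcase, $d_2(\Delta)=d$, and $\Delta$ is connected on at least $a+2\geqslant 3$ vertices (hence not an excluded graph), Theorem~\ref{thm:BH} applied to $\Delta$ gives $\mu_2(\Delta)\geqslant d$; combined with the non-membership this strengthens to $\mu_2(\Delta) > d$, contradicting $\mu_2(\Delta)\leqslant \mu_2(\Gamma) = d$ via Lemma~\ref{lem:interlacing2}. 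If instead $a = 0$, then $\Delta = 2K_{1,b_2}$ and $\mu_2(\Delta) = b_2+1 > b_2 = d$, so the same interlacing step again yields a contradiction.

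The main obstacle is identifying the correct subgraph $\Delta$ in Case~B: symmetrising by trimming $x_1$'s exclusive neighbourhood down to match $x_2$'s is precisely what places $\Delta$ into the narrow family where Lemma~\ref{lem:7a5} supplies the spectral non-membership we need. The boundary configurations ($d=1$ in Case~A and $a=0$ in Case~B) then fall to short direct Laplacian computations rather than further structural input.
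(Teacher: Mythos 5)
Your proof is correct and takes essentially the same route as the paper's: trim the private neighbourhood of $x_1$ so that both nexus vertices have degree $d$, use Lemma~\ref{lem:7a5} (together with Theorem~\ref{thm:BH} and interlacing, which sandwich $\mu_2(\Delta)$ at $d$) to force $\Gamma(x_2)\subseteq\Gamma(x_1)$, and use Lemma~\ref{lem:7a6} with interlacing to rule out any leftover pendant at $x_1$. The only differences are organisational (your case split on $b_2$ versus the paper's sequential argument) and that you explicitly dispatch the boundary configurations $d=1$ and $a=0$, which the paper's proof passes over even though Lemma~\ref{lem:7a5} requires its parameters to be positive and Lemma~\ref{lem:7a6} requires $d\geqslant 2$.
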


\begin{proof}
Let $\Delta$ be the graph obtained from $\G$ be deleting 
$\deg x_1-d$ edges
incident only with $x_1$, so that $x_1$ has degree $d$ in $\Delta$.
Then $\Delta\neq K_2\cup(n-2)K_1$ and $d_2(\Delta)=d=d_2(\G)$.
By interlacing, we obtain $\mu_2(\Delta)=d$.

Let $r=|\Delta(x_1)\cap\Delta(x_2)|$. The graph $\Delta$
is obtained from $K_{2,r}$ by attaching
$d-r$ pendant vertices to each of the two non-adjacent vertices 
of degree $r$.
If $d>r$, then by Lemma~\ref{lem:7a5}, we obtain
$d\notin\operatorname{Spec}(L(\Delta))$, and this contradicts $\mu_2(\Delta)=d$.
Thus $d=r$, and we conclude $\Delta=K_{2,d}$.

Suppose $\G\neq\Delta$. Then $\G$ contains a subgraph $\G'$
obtained from $\Delta=K_{2,d}$ by attaching a
pendant vertex to a vertex of degree $d$. Since
$d_2(\G')=d=d(\G)$, we obtain $\mu_2(\G')=d$ by interlacing.
This contradicts Lemma~\ref{lem:7a6}. Therefore, $\G=\Delta=K_{2,d}$.
\end{proof}

\begin{lemma}\label{lem:6a}
	Let $m \in \{2,\dots,n \}$, let $\Gamma$ be a graph having an $m$-nexus $S$, and suppose that $\mu_m = d_m - m + 2 \geqslant 1$.
	Suppose that each vertex in $S$ has at least $d_m - m + 2$ neighbours in $V(\Gamma) \setminus S$.
	Then $m=2$.
	Furthermore $\Gamma = K_{2,d_m}$ where $d_m \geqslant 2$. 
\end{lemma}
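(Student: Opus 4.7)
The plan is to exploit the equality case of Lemma~\ref{lem:5a} to force $\Gamma[S]$ to split in a prescribed way, and then to apply Lemma~\ref{lem:7a}. Setting $e := d_m - m + 2$, Lemma~\ref{lem:5a} yields $\mu_m \geqslant e$; since we have equality and $e \geqslant 1$, the equality clause forces the induced subgraph on $S$ to be disconnected. The proof of Lemma~\ref{lem:5a} additionally provides a nonzero vector $\mathbf{u}$ on $S$ with $L(S)\mathbf{u} = \mathbf{0}$, so $\mathbf{u}$ is constant on each connected component $C_i$ of $\Gamma[S]$, and $D_{ss} = e$ wherever $\mathbf{u}_s \neq 0$.

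For such an $s$, the inequality $\deg_\Gamma s \geqslant d_m$ combined with $D_{ss} = e$ forces the internal degree $\deg_{C_i}(s) \geqslant d_m - e = m-2$, so $|C_i| \geqslant m-1$. Since $|S| = m$ and $\Gamma[S]$ is disconnected, this pins the components of $\Gamma[S]$ down to exactly two: an $(m-1)$-clique $C_1$ and an isolated vertex $C_2 = \{y\}$.

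To obtain $m = 2$, suppose for contradiction that $m \geqslant 3$. I would invoke the tight-interlacing principle: since $\lambda_m(L(\Gamma)) = \lambda_m(L_S) = e$, some eigenvector $\mathbf{v}$ of $L_S$ for $e$ extends by zero to an eigenvector of $L(\Gamma)$ for $e$. Such a vector has the form $\mathbf{v} = c_1 \mathbf{1}_{C_1} + c_2 \mathbf{1}_{C_2}$, with $c_i \neq 0$ only if every vertex of $C_i$ has external degree exactly $e$. If $c_2 \neq 0$, then $\deg y = D_{yy} = e = d_m - m + 2 < d_m$, contradicting $\deg y \geqslant d_m$. Hence $c_2 = 0$, and $\mathbf{v}$ is proportional to $\mathbf{1}_{C_1}$ extended by zero; reading the eigenvalue equation at any $t \in V(\Gamma) \setminus S$ then gives $|N(t) \cap C_1| = 0$, so there are no edges between $C_1$ and the complement of $S$. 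This contradicts the fact that each vertex of $C_1$ has at least $e \geqslant 1$ neighbours outside $S$. Therefore $m = 2$.

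Finally, with $m = 2$ the set $S$ consists of two non-adjacent vertices, and ordering so that $\deg x_1 \geqslant \deg x_2$ forces $\deg x_2 = d_m$ since $x_1$ and $x_2$ occupy the top two places of the degree sequence. Lemma~\ref{lem:7a} then delivers $\Gamma = K_{2, d_m}$. The main obstacle in executing this plan is the tight-interlacing step, where one must carefully justify the existence of a common eigenvector; this is a standard but subtle consequence of equality in Cauchy interlacing. The bound $d_m \geqslant 2$ in the statement reflects the fact that $K_{2,1} \cong P_3$ is already subsumed under the first case of Theorem~\ref{thm:class1}.
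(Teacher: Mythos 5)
Your proof is correct, but the way you eliminate $m\geqslant 3$ is genuinely different from the paper's. Both arguments start the same way: Lemma~\ref{lem:5a} forces $\Gamma[S]$ to be disconnected, and a degree count pins it down to a component of size $m-1$ plus an isolated vertex $y$ (the paper gets there by observing that some vertex of $S$ has exactly $d_m-m+2$ external neighbours and hence $m-2$ neighbours inside $S$; your route through the null vector $\mathbf{u}$ of $L(S)$ reaches the same conclusion). From there the paper deletes edges to normalise the external degrees, forms an $(m+1)$-part quotient matrix $Q$ of the modified graph, and shows by an explicit computation that $d_m-m+2\notin\operatorname{Spec}(Q)$ when $m>2$. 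You instead exploit the equality $\lambda_m(L(\Gamma))=\lambda_m(L_S)$ to extend an $e$-eigenvector of $L_S$ by zero to an eigenvector of $L(\Gamma)$, and read off a contradiction from the eigenvalue equation at an external vertex. This is cleaner and avoids the graph surgery, but it rests on the equality clause of the interlacing theorem (\cite[Theorem~2.5.1(ii)]{BH}), which the paper never states --- Lemma~\ref{lem:interlacing1} records only the inequalities --- so you must cite or prove that clause explicitly; once you do, the step is sound. Your $m=2$ endgame is also more direct: you apply Lemma~\ref{lem:7a} to $\Gamma$ itself (its hypotheses hold once $x_1\not\sim x_2$ and $\deg x_2=d_2\geqslant 1$ are checked), whereas the paper applies it to a modified graph $\Delta$ and then needs Lemma~\ref{lem:7a6} to climb back up to $\Gamma$. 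One small inaccuracy: you assert that $C_1$ is an $(m-1)$-clique, but the equality case of Lemma~\ref{lem:5a} only shows that the particular vertex $s$ with $\mathbf{u}_s\neq 0$ is joined to all of $C_1\setminus\{s\}$; fortunately the clique structure is never used in your argument, only that $C_1$ is a connected component each of whose vertices has at least one external neighbour.
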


\begin{proof}
Set $d=d_m$ and $e=d-m+1$. 
By Lemma~\ref{lem:5a}, since each vertex in $S$ has $e+1$ neighbours in $V(\Gamma) \setminus S$, the graph induced on $S$ must be disconnected.
Furthermore, there must be at least one vertex $v$ having precisely $e+1$ neighbours in $V(\Gamma) \setminus S$, otherwise, by Lemma~\ref{lem:5a}, we would have $\mu_m \geqslant e+2$, a contradiction.
The vertex $v$ must be adjacent to $m-2$ vertices in $S$.
Hence, since $S$ is disconnected, there exists a unique vertex $s_0$ having no neighbours in $S$.

Now we can write $S$ as the disjoint union $S = \{s_0\} \cup T$. 
Delete edges between $S$ and $V(\Gamma)\setminus S$ so that
every vertex in $T$ has precisely $e$ neighbours outside $S$ and so that $s_0$ has exactly $d$ neighbours outside $S$.
Delete the vertices outside $S$ that are now isolated.
Let $\Delta$ denote the resulting graph.
Let $Q$ be the quotient matrix of $L(\Delta)$
with respect to the partition of $V(\Delta)$ into $(m-1)+1+1=m+1$ parts:
\[
	\left \{ \{ s \} \mid s \in T \right \} \cup \left \{ \{ s_0 \} \right \} \cup \left \{ V(\Delta)\setminus S \right \}.
\]
Then, with $r = |V(\Delta)\setminus S|$, we have
\[Q=
\begin{pmatrix}
L_T & 0 & -(e+1)\mathbf{1} \\
0 & d & -d \\
-\frac{e+1}{r}\mathbf{1}^\top & -\frac{d}{r} & \frac{e(m-1)+d}{r}
\end{pmatrix}.
\]
Whence we obtain a lower bound for $\mu_m(\Delta)$:
\begin{align*}
\mu_m(\Delta)&=\lambda_m(L(\Delta))
\\&\geqslant \lambda_m(Q)\\&\geqslant
\lambda_m{\begin{pmatrix} L_T & 0 \\ 0&d\end{pmatrix}}\\
&= \min\{\lambda_{m-1}(L_T),d\} \\
&=\min\{\lambda_{m-1}(L(T)+(e+1)I),d\}\\
&=\min\{e+1,d\}\\
&=e+1 && \text{ ($d\geqslant e+1$ since $m \geqslant 2$).}
\end{align*}

We claim that if $m>2$ then $e+1\not\in\operatorname{Spec} Q$ and so, by interlacing, $\mu_m(\Gamma)>e+1$. 
Indeed, suppose
\[Q\begin{pmatrix} \mathbf{u} \\ v \\ w \end{pmatrix}=
(e+1)\begin{pmatrix}\mathbf{u} \\ v \\ w \end{pmatrix},\]
where $\mathbf{u}\in\R^{m-1},\;v,w\in\R.$ Then
\begin{align}
L_T\mathbf{u}-(e+1)w\mathbf{1}&=(e+1)\mathbf{u},\label{2133j1}\\
dv-dw&=(e+1)v,\label{2133j2} \\
-\frac{e+1}{r}\mathbf{1}^\top\mathbf{u}-\frac{d}{r}v+\frac{e(m-1)+d}{r}w&=(e+1)w.
\label{2133j3}
\end{align}
Since $L_T=L(T)+(e+1)I$, equation~\eqref{2133j1} implies
$L(T)\mathbf{u}=(e+1)w\mathbf{1}.$ Now
\begin{align*}
0&=\mathbf{1}^\top L(T)\mathbf{u} \\ &=\mathbf{1}^\top(e+1)w\mathbf{1} \\
 &= (e+1)w(m-1).
\end{align*}
This implies $w=0$. Since $T$ is connected, equation~\eqref{2133j1} implies
$\mathbf{u}=k\mathbf{1}$ for some $k\in\R$.
And by equation~\eqref{2133j2}, we have $(d-e-1)v=0$.
If $m>2$ then $d>e+1$, $v=0$ and, by equation~\eqref{2133j3}, we have $\mathbf{u}=\mathbf{0}$.
Hence $e+1\not\in\operatorname{Spec} Q$.

Therefore $m=2$, and by Lemma~\ref{lem:7a}, we have $\Delta = K_{2,d_m}$.
By Lemma~\ref{lem:7a6} and interlacing, we conclude $\Gamma = K_{2,d_m}$.
\end{proof}

\begin{lemma}\label{lem:6b}
	Let $m \in \{2,\dots,n \}$, let $\Gamma$ be a graph having an $m$-nexus $S$, and suppose that $\mu_m = d_m - m + 2 \geqslant 1$.
	Suppose that at least one vertex in $S$ has precisely $d_m - m + 1$ neighbours in $V(\Gamma) \setminus S$.
	If $d_m - m + 1 = 0$ then $\Gamma$ is $K_m$ with $p$ pendant vertices attached to a vertex, otherwise $\Gamma$ is $K_m$ with $\mu_m - 1$ pendant vertices attached to each vertex.
\end{lemma}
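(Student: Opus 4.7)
The plan is to mirror the approach of Lemma~\ref{lem:6a}: extract structural consequences from having a vertex $v_0 \in S$ with exactly $e = d_m - m + 1$ outside neighbours, and then use a quotient-matrix computation to pin down $\Gamma$. Since $v_0$ has at most $m - 1$ neighbours in $S$ and degree at least $d_m = m - 1 + e$, equality holds throughout: $v_0$ is adjacent to every other vertex of $S$ and has degree exactly $d_m$. The same reasoning applies to any $u \in S$ with exactly $e$ outside neighbours.

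I would first handle $e = 0$ separately. Here $v_0$ has no outside neighbours, so the component of $\Gamma$ containing $v_0$ lies inside $S$; together with $\Gamma$ having no isolated vertices and $S$ being an $m$-nexus, this forces the induced subgraph on $S$ to be $K_m$ and every outside vertex to attach to $S$. Since $\mu_m = 1$, I would use Lemma~\ref{lem:interlacing2} combined with a small spectral computation (in the style of Lemmas~\ref{lem:7a5} and~\ref{lem:7a6}) to show that attaching pendants at two distinct vertices of $S$ yields $\mu_m > 1$; hence all pendants are attached to a single vertex of $S$. For $e \geqslant 1$ the argument is via the symmetric $(m+1) \times (m+1)$ quotient matrix $B$ of $L(\Gamma)$ associated to the partition $\{\{u\} : u \in S\} \cup \{V(\Gamma) \setminus S\}$, for which Lemma~\ref{lem:interlacing1} gives $\lambda_m(B) \leqslant \mu_m = e + 1$. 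Writing $p_u = |\Gamma(u) \setminus S|$ and $r = |V(\Gamma) \setminus S|$, a direct computation shows that in the target graph ($K_m$ with $e$ pendants per vertex) this bound is tight, attained by a symmetric eigenvector $(\mathbf{1}_S, c)$ for a suitable scalar $c$.

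The main obstacle is to show that any deviation from the canonical structure either destroys this eigenvector or pushes $\lambda_m(B)$ strictly above $e + 1$. Three kinds of deviation need simultaneous handling: a non-edge inside $S$ (necessarily between two vertices of $S \setminus \{u : p_u = e\}$, since any vertex with exactly $e$ outside neighbours is adjacent to all of $S$), a non-uniform outside-degree vector $(p_u)$, and an outside vertex with multiple neighbours in $S$. I expect each can be excluded by an eigenvector argument modelled on the one in the proof of Lemma~\ref{lem:6a} (where the hypothetical $\lambda = e + 1$ eigenvector is written in block form and a contradictory scalar identity is extracted), together with Lemma~\ref{lem:5a} applied to suitable sub-configurations; the $m = 2$ case of Lemma~\ref{lem:7a} provides a useful base. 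A final invocation of Lemma~\ref{lem:interlacing2} together with Lemmas~\ref{lem:7a5} and~\ref{lem:7a6} should rule out any extra edge in $\Gamma$ beyond the reduced structure.
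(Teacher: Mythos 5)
Your plan correctly identifies the framework the paper uses (a quotient matrix on the partition $\{\{u\}: u\in S\}\cup\{V(\Gamma)\setminus S\}$ plus interlacing), but the decisive content of the lemma is exactly the step you defer with ``I expect each can be excluded by an eigenvector argument,'' and as stated your route does not close. Working with the quotient of $L(\Gamma)$ itself leaves you with a multi-parameter problem: the outside-degree vector $(p_u)$, the adjacency pattern inside $S$, and the attachment pattern of outside vertices all vary independently, and there is no single eigenvector identity of the Lemma~\ref{lem:6a} type that handles all of them at once. The paper's proof avoids this by first \emph{deleting} edges so that every vertex of $S\setminus W$ has exactly $e+1$ outside neighbours (where $W$ is the set of vertices of $S$ with exactly $e$ outside neighbours and $t=|W|$), reducing everything to the single parameter $t$. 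It then refines to a three-part quotient $R$ on $W$, $S\setminus W$, $V(\Delta)\setminus S$, computes its eigenvalues $0,(A\pm\sqrt{B})/2r$ in closed form, and proves the discriminant inequality $(A-2r(e+1))^2-B>0$ whenever $t<m$ (resp.\ $t<m-1$ for $e=0$); the remaining eigenvalues of the fine quotient are disposed of by the block-diagonalisation trick giving $\theta\geqslant e+t+1$. None of this is an ``eigenvector argument modelled on Lemma~\ref{lem:6a}'' — it is an explicit quantitative computation, and it is the heart of the proof. Your proposal also never identifies $t$ as the controlling parameter, so the case split ($t=m$ for $e\neq 0$; $t\in\{m-1,m\}$ for $e=0$) that actually yields the two conclusions of the lemma is missing.

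Two further specific problems. First, in your $e=0$ case the claim that the induced subgraph on $S$ is forced to be $K_m$ by purely combinatorial reasoning is false: only the vertices with zero outside neighbours are forced to be adjacent to all of $S$, and the component of $v_0$ need not lie inside $S$ since $v_0$'s neighbours in $S$ may have outside neighbours (e.g.\ $S$ inducing a star inside a path $P_5$ satisfies all your combinatorial constraints). Ruling out more than one vertex of $S$ having outside neighbours again requires the spectral computation above. Second, your final appeal to Lemmas~\ref{lem:7a5} and~\ref{lem:7a6} to rule out extra edges is misplaced: those lemmas concern $K_{2,d}$ and belong to the Lemma~\ref{lem:6a}/\ref{lem:7a} branch; in the present lemma the conclusion $\Gamma=\Delta$ (for $e\neq0$) falls out of $t=m$ and $r=em$ directly, and for $e=0$ the restored edges are automatically pendants at the single vertex $s_0$.
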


\begin{proof}
	Set $e = d_m - m + 1$.
	There exists at least one vertex in $S$ with only
	$e$ neighbours in $V(\Gamma) \setminus S$. 
	Every such vertex is
	adjacent to all other vertices in $S$. 
	Let $W$ be the set of these
	vertices and let $t=|W|$. 
	
	Delete edges between $S\setminus W$ and $V(\Gamma)\setminus S$ so that
	every vertex in $S\setminus W$ has precisely $e+1$ neighbours in $V(\Gamma)\setminus S$.
	Delete the vertices outside $S$ that now are isolated.
	Let $\Delta$ denote the resulting graph.
	
	Consider the quotient matrix $Q$ of $L(\Delta)$ for
	the partition of the vertex set $V(\Delta)$ into $t + (m-t) + 1 = m+1$ parts:
	\[
		\left \{ \{s\} \mid s \in W \right \} \cup \left \{ \{s\} \mid s \in S\setminus W \right \}  \cup \left \{ V(\Delta)\setminus S \right \}.
	\] 
	Let $r=|V(\Delta)\setminus S|$.
	Then
	\[Q=\begin{pmatrix} L_W & -J & -e\mathbf{1} \\ -J^\top & L_{S\setminus W} &
	-(e+1)\mathbf{1} \\-\frac{e}{r}\mathbf{1}^\top &-\frac{e+1}{r}\mathbf{1}^\top &
	\frac{me+m-t}{r}
	\end{pmatrix},
	\]
	where $L_W=(e+m)I-J$.

	Consider the quotient matrix $R$ of $L(\Delta)$ for the partition of the
	vertex set $X$ into $3$ parts $W$, $S\setminus W$, and $V(\Delta)\setminus S$. Then
	\[R=\begin{pmatrix} e+m-t & -m+t &-e \\ -t & e+t+1 & -e-1 \\
	-\frac{te}{r} & -\frac{(e+1)(m-t)}{r} & \frac{me+m-t}{r}\end{pmatrix}.
	\]
	The eigenvalues of $R$ are
	\[0,\frac{A-\sqrt{B}}{2r},\frac{A+\sqrt{B}}{2r},\]
	where
	\begin{align*}
	A&=(e+1)m+(2e+m+1)r-t \\
	B&=(e+1)^2m^2 + (m-1)^2r^2 - 2(e + 1)m(m-1)r \\
	 &\quad  - 2((e + 1)m + (2e - m + 1)r - 2r^2)t + t^2.
	\end{align*}
	These three eigenvalues are also the eigenvalues of $Q$ whose eigenvectors
	are constant on three sets $W$, $S\setminus W$, and $V(\Delta)\setminus S$.

	The left eigenvectors corresponding to the remaining eigenvalues
	of $Q$ are perpendicular to the subspaces that are constant
	on the three sets.
	Hence the eigenvectors are of the form
	$(\mathbf{u}^\top,\mathbf{v}^\top,0)$ with
	$\mathbf{1}^\top\mathbf{u}=\mathbf{1}^\top\mathbf{v}=0$.
	Therefore these remaining eigenvalues of $Q$ are eigenvalues of
	\[P=\begin{pmatrix}L_W & -J\\-J^\top&L_{S\setminus W}\end{pmatrix},\]
	which is a principal submatrix of $Q$.
	Furthermore, these eigenvalues remain
	unchanged if a multiple of $J$ is added to a block of the partition
	of $P$. 
	So they are also the eigenvalues of the matrix
	\[P'=\begin{pmatrix} (e+m)I & O  \\ O & L_{S\setminus W}\end{pmatrix}.
	\]
	Since $L_{S\setminus W}=L(S\setminus W)+(e+t+1)I$ and
	$L(S\setminus W)$ is positive semidefinite, the eigenvalues $\theta$
	of $P'$
	satisfy
	$\theta\geqslant e+t+1>e+1$.
	
	It remains to consider the eigenvalues $\frac{A-\sqrt{B}}{2r}$ and $\frac{A+\sqrt{B}}{2r}$.
	Observe that we have
\begin{align}
	(A-2r(e+1))^2-B 
&	=4r(m-t)\left(e(m-1)+(m-(t+1))\right.
\notag\\&\quad\left.+t(me+m-t-r)\right).
\label{2133t1}
\end{align}
	Now assume that, for $e \ne 0$, we have $t<m$ and, for $e = 0$, we have $t < m-1$.
	Then, if $e\neq 0$, since $t<m$ we see that equation~\eqref{2133t1} is positive.
	And if $e=0$, since $m>t+1$ again we see that equation~\eqref{2133t1} is  positive.
	Hence, in either case, we find that $(A-2r(e+1))^2-B > 0$, which implies $(A-\sqrt{B})/2r > e+1$.
	Therefore, except for the smallest one, all eigenvalues of $Q$ are strictly larger than $e+1$.
	By interlacing, we would have $\mu_m>e+1$, a contradiction.
	
	Therefore, if $e \ne 0$ then we must have $t = m$.
	Further $r = em + m - t = em$, that is, $\Delta$ is a complete graph with $e = \mu_m - 1$ pendant vertices attached to each vertex.
	In this case $\Gamma = \Delta$.
	
	And finally, if $e = 0$ then we must have either $t = m-1$ or $t = m$.
	For $m=t$ the graph $\Delta$ is $K_m$ and so is $\Gamma$, but for such a graph $\mu_m \ne d_m - m + 2$.
	For $m=t+1$, we have $r\geqslant 1$, that is, the graph $\Delta$ is a complete graph $K_m$ with $r$ pendant vertices attached at the same vertex. 
	Hence $\Gamma$ must be a complete graph with $p$ pendant vertices attached at the same vertex, where $p \geqslant r$. 
	This completes the proof.
\end{proof}

Now we can prove Theorem~\ref{thm:class1}.

\begin{proof}[Proof of Theorem~\ref{thm:class1}]
	Since graphs with $\mu_1 = d_1 + 1$ have already been classified, we assume that $m\geqslant 2$.
Let $e=d_m-m+1\geqslant 0$ and let $S$ be an $m$-nexus of $\Gamma$. 
Each vertex in $S$ has at least $e$ neighbours outside $S$. 
First suppose that each vertex of $S$ has at least $e+1$ neighbours outside.
Then, by Lemma~\ref{lem:6a}, we have $\Gamma=K_{2,d_m}$ where $d_m \geqslant 2$.
Finally suppose at least one vertex of $S$ has precisely $e$ neighbours outside.
Then the rest of the theorem follows from Lemma~\ref{lem:6b}.
\end{proof}

Finally we use Theorem~\ref{thm:class1} to prove Theorem~\ref{thm:1}.

\begin{proof}[Proof of Theorem~\ref{thm:1}]
%
Let $S$ be an $m$-subset of the vertices of $\Gamma$ such that each vertex in $S$ has degree at least $d_m$.

Suppose first that $S$ is an $m$-nexus. 
Let $G(r)$ be a complete graph $K_m$ with $r$ pending
edge attached at the same vertex, where $r\geqslant 1$.
By Theorem~\ref{thm:class1}, the graph $\Gamma$ is exactly $G(r)$ for some $r$.

Now assume that there are some edges entirely outside $S$. If
$\Gamma$ has another component, then since, for any $r \geqslant 1$, we have $\mu_m(G(r)\cup K_2)=2>1$,
by interlacing, $\mu_m(\Gamma)>1$. So $\Gamma$ has only one connected
component. Hence there must exist a vertex
$u\in\bigcup_{v\in S}\Gamma(v)\setminus S$ such that $\deg u \geqslant 2$.
Delete the edges attached to $u$ such that $u$ is still the neighbour
of $S$ and $\deg u =2$.

Consider the $(m+1)\times(m+1)$ principal submatrix $Q$ indexed
by $S$ and $u$. Without loss of generality, we assume that $u$ is
adjacent to the first index of $S$, denoted it by $v$. Then
\[Q=\begin{pmatrix}e+m-1&-1&\cdots&-1&-1\\-1&&&&0\\
\vdots&&mI-J&&\vdots\\-1&&&&0\\-1&0&\cdots&0&2\end{pmatrix}.\]
Consider the quotient matrix $R$ of $Q$ for the partition of the
vertex set into three parts $v, S\setminus v$ and $u$. We find
\[R=\begin{pmatrix}e+m-1&1-m&-1\\-1&1&0\\-1&0&2\end{pmatrix}.\]

The eigenvalues of $R$ are the roots of
\[f(x)=x^3-(e+m+2)x^2+(3e+2m-1)x-2e+1.\]
These three numbers are also the eigenvalues of $Q$ for eigenvectors
that are constant on three sets $\{v\},S\setminus \{v\},\{u\}.$

The eigenvectors corresponding to the remaining eigenvalues of $Q$
are
perpendicular to the subspace that are constant on the three sets,
so of the form $(0,\mathbf{w},0)$, with $\mathbf{1}^\top \mathbf{w}=0,$ so they are the
eigenvalues of the following principal submatrix of $Q$: $mI-J$.
And these eigenvalues remain unchanged if a multiple of $J$ is
added. So they are also the eigenvalues of $mI$. Since $m>1$, the
remaining eigenvalues are strictly larger than $1$.

Since $f(1)=m-1>0$ and $f(3)=-2r-3m+7<0$, the second largest root
of $f(x)$ are strictly larger than $1$. So, by interlacing,
$\mu_m(\Gamma)>1$.

This implies that there are no edges entirely outside $S$, so
$\Gamma$ is exactly $G$.
\end{proof}


\begin{thebibliography}{9}
\bibitem{BH1} A.E. Brouwer and W.H. Haemers, A lower bound for the Laplacian
eigenvalues of a graph---Proof of a conjecture by Guo, Linear Algebra and its
Applications 429 (2008) 2131--2135.
\bibitem{BH} A.E. Brouwer and W.H. Haemers, Spectra of Graphs, Springer, 2012.
\bibitem{ro} R. Grone, R. Merris, The Laplacian spectrum of a graph
\uppercase\expandafter{\romannumeral2}, SIAM J.Discrete Math.7
 (1994) 221--229.
\bibitem{guo} J.-M. Guo, On the third largest Laplacian eigenvalue
of a graph, Linear and Multilinear Algebra 55 (2007) 93--102.
\bibitem{liPan} J.-S. Li and Y.-L. Pan, A note on the third second largest eigenvalue of the Laplacian matrix of a graph, Linear and Multilinear Algebra 48 (2000) 117--121.

\end{thebibliography}
\end{document}